\documentclass{birkjour}
 \newtheorem{thm}{Theorem}[section]

 \theoremstyle{definition}
 
 \theoremstyle{remark}

 \numberwithin{equation}{section}

\usepackage{url}
\usepackage{amsmath,amssymb}
\usepackage{amsfonts}
\usepackage{subfig} 
\usepackage{caption}
\setlength{\textwidth}{13.cm}
\setlength{\textheight}{17.5cm}
\voffset-.25cm
\hoffset-.5cm

\begin{document}
\title[The Wade Formula and the oil price fluctuation]
 {The Wade Formula and the oil price fluctuation: An Optimal Control  Theory  approach}
 
\author[Mendy et al.]{Pierre Mendy, Babacar M. Ndiaye, Diaraf Seck, Idrissa Ly}
\address{Laboratory of Mathematics of Decision and Numerical Analysis.\br
Cheikh Anta Diop University.  
BP 45087, 10700. Dakar, Senegal.}

\email{pierre.mendy@ucad.edu.sn, babacarm.ndiaye@ucad.edu.sn, \\ diaraf.seck@ucad.edu.sn, idrissa.ly@ucad.edu.sn}

\thanks{This work was completed with the support of the NLAGA project}

\keywords{Wade Formula, optimal control, options, volatility, oil prices}

\date{November 21, 2022}

\begin{abstract} 
By considering the Wade Formula, we propose a model to study the evolution of the oil price per barrel. Our model shows that the policy of diversification of the energy is to be supported. This model is proposed to see how it is possible to control parameters so that the oil price should decrease.
\end{abstract}

\maketitle

\vspace{-1.cm}
\section{Introduction}\label{intro}
The rapid increase in oil prices between 2002 and 2008 and their sharp decline in the second half of 2008 and 2014, (see \cite{baum2016}, \cite{ArBlan2015}),  and the consequence of covid19 and Ukraine conflic on oil price,   (\cite{ Wor2020}, \cite{Tagh2020}, \cite{bbc}),   has renewed the interest in the causes of oil price fluctuation [figure 1, curve(d)], and the effects of energy prices on the macroeconomy. \\
A large of studies prove that the oil fluctuations have a considerable consequence in economic activity (\cite{ThYo2015},\cite{JrSa2004},\cite{OlEk2013}, \cite{FuSo2015},\cite{FtGrTe2015}, \cite{DeReMa2009}, \cite{Taghetal2019d},  \cite{Taghetal2013b}, \cite{Taghetal2016a}).\\
While the oil price shock is asymmetric between oil exporters countries and oil importers countries (\cite{SgAp2016}, \cite{Natetal1998}, \cite{Taghetal2016c}), 
this asymmetric oil shock had inspired the Wade Formula. \\
Between 2005 and 2006, the full professor in Economics Abdoulaye Wade, former President of the Republic of Senegal, proposed a formula related to the evolution of the oil price per barrel. This formula translates the super-profits generated by the oil companies (selling in the world) and countries which produce oil.\\
At first, from the Wade Formula, we describ in section {\ref{model}}, the model. We resolve and show a necessary optimal condition which could give some hints for the orientations of the energy policies in section {\ref{modelres}}. We propose some simulations and interpretations of the obtained results, section {\ref{numericalres}}.  As conclusion in section {\ref{ccl}}, we present some variations of the obtained results and politics implications.

\section{Model description}\label{model}
The formula should be stated as follows:
Let be the time $t\in [T_0, T]$,  where $T_0$  is the initial state. 
Since it is possible to find the structure of the oil price/barrel without financial speculation, President Wade studied and proposed a reasonable price (win-win price) $P_0 = 29\$ $. For more details, see \cite{Wade}. \\
The Wade Formula for super profit is defined by the equation (\ref{Sp1})
\begin{equation}\label{Sp1}
(p_t - 29)q_t = S_t
\end{equation}
where $p_t$ represents the price which is applied in the world for buying a barrel of oil, $q_t $ is the quantity of oil bought by the countries, and $S_t$ is the super profit for the oil companies or the super coast supported by the countries which don’t produce oil. \\
It is important to remark that this formula is a conservation law.

\section{ Model resolution: Optimal control approach}\label{modelres}
\noindent In this section, we give a necessary optimal condition to minimize the super coasts, denoted\footnote{For more details  in optimal control theory see \cite{SoZo1999}, \cite{Ru1994}, \cite{Ma2002}, \cite{Fau1988}}, $S(t).$ \\
A reasonable model to describe the evolution of the reserves of oil in the world can be written as follows: 
\begin{equation}\label{weq1}
\frac{d R}{dt}         =  -v(t)   + \alpha(t) R(t), t\geq 0
\end{equation}
where $\alpha (t) \in [0, 1[$ is  the rate  of  the increase of
oil in the world, $R(t)$ are the reserves and $v(t)$ is the demand
in the world at time $t$. \\
In the following, we are going to take $\alpha (t)=
\alpha $ a real constant. But the theorem below is satisfied even if
$\alpha $ depends on the time $t$.
\begin{thm}
Let  $a(t)$ be   the demand of the country which doesn't
produce oil, then
a necessary  optimal condition  to minimize the super coasts for a
country which doesn't product oil is:
$$
a^{*}(t) =   \displaystyle\frac{c_0 \exp(-\alpha t)}{2(p(t) - 29)^{2}}.
$$
\end{thm}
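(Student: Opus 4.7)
The plan is to treat the theorem as a standard application of Pontryagin's minimum principle to an optimal control problem whose state is the oil reserve $R(t)$ satisfying (2.1), whose control is the demand $a(t)$ of the non-producing country, and whose cost functional is the integrated squared super-cost
\[
J(a) = \int_{T_0}^{T} \bigl((p(t)-29)\, a(t)\bigr)^{2}\, dt,
\]
obtained by squaring the Wade conservation quantity (1.1) to get a strictly convex, coercive penalization of the deviation from the win-win price. The final reserve $R(T)$ (or, equivalently, a transversality condition) plays the role of the boundary constraint that will supply the free constant $c_0$ appearing in the conclusion.

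First I would write down the Hamiltonian with costate $\lambda(t)$:
\[
H(R,a,\lambda,t) = (p(t)-29)^{2}\, a(t)^{2} + \lambda(t)\bigl(-a(t) + \alpha R(t)\bigr),
\]
where I view $a(t)$ as the controlled portion of $v(t)$; any non-controlled demand would contribute an additive term independent of $a$ and thus would not affect the first-order condition. The adjoint equation is $\dot\lambda = -\partial H / \partial R = -\alpha\lambda$, which integrates in closed form to
\[
\lambda(t) = c_{0}\exp(-\alpha t),
\]
with $c_{0}$ fixed by the transversality condition at $T$. This is the step that produces the exponential factor in the target formula; note that it goes through essentially unchanged if $\alpha$ depends on $t$, with $\alpha t$ replaced by $\int_{T_0}^{t}\alpha(s)\,ds$, which matches the authors' remark preceding the statement.

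Next I would impose the stationarity (minimization) condition on the control. Because $H$ is strictly convex in $a$ whenever $p(t)\neq 29$, setting $\partial H / \partial a = 0$ yields $2(p(t)-29)^{2}\, a(t) = \lambda(t)$, hence
\[
a^{*}(t) = \frac{\lambda(t)}{2(p(t)-29)^{2}} = \frac{c_{0}\exp(-\alpha t)}{2(p(t)-29)^{2}},
\]
which is exactly the stated formula; second-order optimality is automatic from the strict convexity of $H$ in $a$.

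The main obstacle is conceptual rather than computational. One must justify the choice of the \emph{squared} super-cost as the objective, since the signed super-cost $(p-29)a$ is linear in $a$ and would be trivially minimized at a corner of the feasible set, producing a degenerate and unhelpful answer. A secondary technical issue is to treat the degenerate set $\{t : p(t)=29\}$, on which the first-order condition cannot be inverted and where any admissible value of $a$ already gives zero instantaneous cost; away from this set the formula is unambiguous, which matches the Wade-formula viewpoint in which $p_t = 29$ is precisely the balanced ``win--win'' regime.
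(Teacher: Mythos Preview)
Your proof is correct and follows essentially the same route as the paper: form the Hamiltonian, solve the adjoint equation $\dot\lambda=-\alpha\lambda$ to get $\lambda(t)=c_0 e^{-\alpha t}$, and invert the stationarity condition $\partial H/\partial a=0$. The only cosmetic difference is that the paper carries a general exponent $m$ in the cost $\int_0^T S(t)^m\,dt$ and specializes to $m=2$ at the end, whereas you fix the quadratic objective from the outset; your added remarks on convexity and on the degenerate set $\{p(t)=29\}$ are extras not present in the paper.
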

\begin{proof}
Let $T$ be  a fixed time.
\begin{eqnarray}
\min\int_0^{T} S(t)^{m} dt,\quad m\in \mathbb{N} \\
s.c \left\{
\begin{array}{lclcl}
\frac{d R}{dt}        & = & -v(t)   + \alpha R(t)  & on \,\, [0, T]& \\
R(0)   & = & Q
\end{array}
\right.
\end{eqnarray}
where, $Q$ is the initial stock of the reserves,
$$
S(t)  =   ( p(t)  - 29)a(t),
$$
$$
v  =     a + w,
$$
where $a$ is the demand of country which doesn't produce oil.\\
Let's  take the Hamiltonian defined by:
\begin{equation}\label{weq2}
H(R(t),a(t),\lambda(t),t) = S(t)^{m} +\lambda(t) ( -v(t) + \alpha  R(t))
\end{equation}
and $\lambda (t)$ being  the Pontryagin multiplier.\\
We have the  necessary optimal  conditions:
\begin{eqnarray*}
\frac{\partial H}{\partial R}&  =  & \alpha  \lambda(t)= -\frac{d \lambda }{dt }\\
\frac{\partial H}{\partial  a} & =   & 0\\
\end{eqnarray*}
The  condition  $\frac{\partial H}{\partial  a}  =    0$    is  equivalent  to  $m  S(t)^{m-1}( p(t) -29) - \lambda =0.$ \\
This implies that:
$$
S(t) =  \left(\frac{\lambda}{m(p(t) - 29)}\right)^{\frac{1}{m-1}}
$$
Since: $$-\frac{d \lambda }{dt }= \alpha  \lambda(t),$$
then $\lambda (t)= c_0 \exp(-\alpha t)$ where $c_0$ is a real constant.\\
Finally:
$$ S(t) =  \left(\frac{c_0 \exp(-\alpha t)}{m(p(t) - 29)}\right)^{\frac{1}{m-1}}$$
If  $m = 2,$ we  obtain:
\begin{eqnarray*}
S(t)& = & \displaystyle\frac{c_0 \exp(-\alpha t)}{2(p(t) - 29)}\,\,\mbox{and by the  following equality:}\\
S(t)&  = &  ( p(t)  - 29)a(t) \mbox{ we have:} \\
a{*}(t) &=  &  \displaystyle\frac{c_0 \exp(-\alpha t)}{2(p(t) - 29)^{2}}\qed
\end{eqnarray*}
\end{proof}

\subsection{Initial condition variation}
We assume that $R(t)$ is Lipschitzian, so there exists a flow defined by:
$$
\begin{cases}
X \rightarrow X\; \text{space of phases}\\
x_0 \rightarrow \phi_t(x_0) = x(t)
\end{cases}
$$
with $X=\mathbb{R}^{n}$, for 
n= 1,
$$
\begin{cases}
x(t) \rightarrow R(t) \\
x_0 \rightarrow Q
\end{cases}
$$
We consider a period of horizon h 
on the time interval $[t_0, t_0+h]$ and $Q \in [Q_0, Q^*]$
with $ Q_0 = Q(t0)$ and $ Q^* = Q(t_{t0+h})$.\\
We define:
$$
Q_k = Q_0 + \frac{k}{t_0+h}(Q^*-Q_0)
$$
The initial conditions vary with the variants of $k\in [t_0, t_0+h]$.

\subsection{Terminal condition variation}
We still consider equations (3.3) and (3.4) and rewrite them by varying the terminal conditions. Let $s \in [0, T]$. Let $t=T-s$, $Y(s) = R(T-s)$. The optimization problem becomes:
$$
min \int_{0}^{T}[S(T-s)]^mds
$$

$$
sc
\begin{cases}
\frac{dY(s)}{ds} = -v(T-s) + \alpha Y(s)\\
Y(0) = R_T
\end{cases}
$$
where $R_T$ is a real value sequence on $[Y_0,..., Y^*]$
with $Y_0 = Y(t_0)$ and $Y^* = Y(t_0 + h)$.

$$
R_T(k) = Y_0 + \frac{k}{t_0 +h}\left( Y^* - Y_0\right) 
$$
$$
S(T-s) = \left( P(T-s) - 29\right) a(T-s)
$$
\noindent In the next section we are going to use the curve below. This
represents the plot of  the result of the necessary optimal
condition. The  horizontal axis being the values  of the prices
while the vertical axis is the values of the demand $a^*$.

\section{Numerical resolution and results interpretations}\label{numericalres}
The simulation parameters are as follows. In the graphs of figure 1, we have taken as constant c the average of the percentage rate increase of the world fossil reserves. In figure 2, we consider the evolution of the rate of the resources over the study period in the numerical resolution of our optimal control problem
The curves in Figures 1-2 are the results of the optimal control resolution. 
The result could be interpreted as follows:
\begin{enumerate}
\item Between 1980 and 2021, [\ref{wadefig4}], there were two periods where the average price of oil was above 29 dollars, 1980 to 1983 and 2004 to 2021, and a period where it was below 29 dollars. In the first period (1980 to 1983), the average price of a barrel of oil fluctuated between  30 dollars and  37 dollars. It peaked at 39 dollars in 1981 due to the Reagan cut taxes.
Two other causes of the high fluctuation of the average barrel price are the Iran embargo in 1980 and the end of the recession in 1982.
\item During the second period (2004-2021), the average price of a barrel of oil fluctuated between about 35 dollars in 2004 and 103 dollars in 2011. It reached a peak of 127 dollars in 2008 during the financial crisis. Other factors that explain the high fluctuation during this period are Hurricane Katrina in 2005, Bernanke becomes Fred Chair in 2006, the banking crisis in 2007, the great recession in 2009, Iran threatening the Straits of Hormuz, the increase of the 15 \% dollars ,  the US shale oil increased 2015, the decrease of the dollars  in 2016, the OPEC cut oil supply to keep prices stable and demand reduction of the pandemic since 2020.
\item From 1984 to 2003, the average price of a barrel oscillated between 14 dollars  in 1986 and 27 dollars  in 2003. During the Gulf War, the peak was reached with a barrel at almost 33 dollars . Other events during this period allowed this fluctuation are the prices doubled, the recession and the war in Afghanistan in 2002.  The barrel reached the lowest price of 10 dollars in 1999.
\item The curves [\ref{wadefig3}] and  [\ref{wadefig4}] have the shape of the sigmoid function. The world's demand for fossil energy cannot grow in an unlimited way. With climate change and the strong fluctuations of the oil price, many countries are diversifying their energy sources toward renewable energies. Fossil fuel reserves are also limited despite the discovery of new deposits.
\item The evolution of the Super profit (curve c fig1) is consecutive to the differential between the price 29 and the average price oil with a phase of strong fall between 1984 and 2003
\item The curves [\ref{wadefig5}] and  [\ref{wadefig6}] have the shape of the sigmoid function. The world's demand for fossil energy cannot grow in an unlimited way. With climate change and the strong fluctuations of the oil price, many countries are diversifying their energy sources toward renewable energies. Fossil fuel reserves  [\ref{wadefig7}] are also limited despite the discovery of new deposits.
\item The countries which don't product oil have  to diversify their energy sources as soon as possible. It would be a  good
orientation even for the countries which product oil. Because the resources will disappear at a time $T^*$.
\item The strategies  to develop the research  in the other type of energy and their production have to be encouraged. This assertion
is justified in the following sense: when we plot the necessary optimal condition (see Figure1 a)\footnote{The same interpretation can be done for the real data [Figure 1b)]}, it is easy  to see that, if the price of the barrel goes far from 29 dollars, $a^*$ decreases. And then  the super coast $S(t)$ is minimized. But this quantity will not suffice for the  consumption  in the country because of growth of the needs in energy.
\end{enumerate}
\begin{figure}[h]
\subfloat[Simulated data $(c0= \%\bar\alpha =0.2$, m=2)\label{wadefig1}]{
\includegraphics[width=0.48\textwidth]{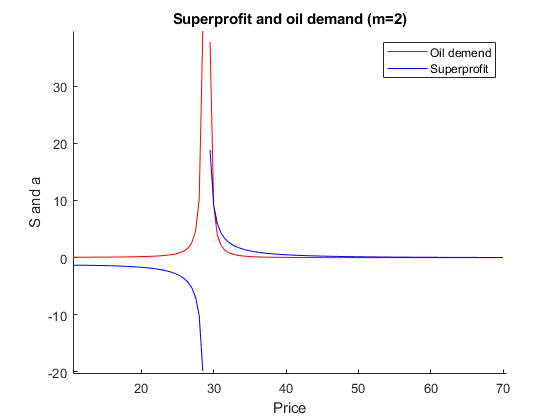}
}
\hfill
\subfloat[Evolution of world demand oil\label{wadefig2}]{
\includegraphics[width=.48\textwidth]{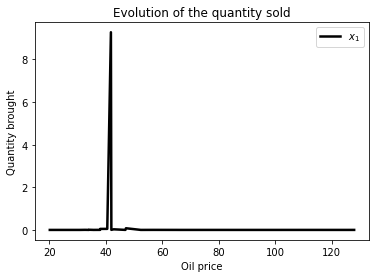}
}
\label{Wade Curve}
\vfill
\subfloat[S(t) Real data from  from International Energy Agency (IEA)\label{wadefig3}]{
\includegraphics[width=.48\textwidth]{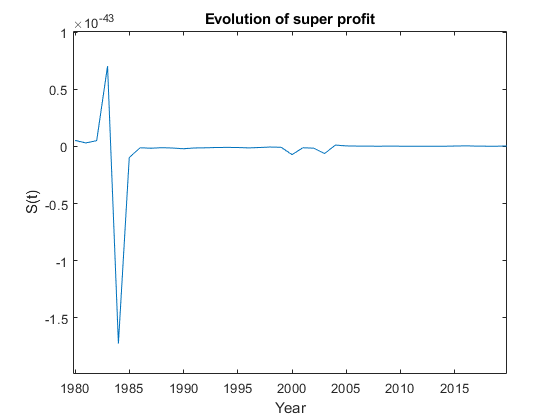}
}
\hfill
\subfloat[Oil price  data from  EIA\label{wadefig4}]{
\includegraphics[width=.48\textwidth]{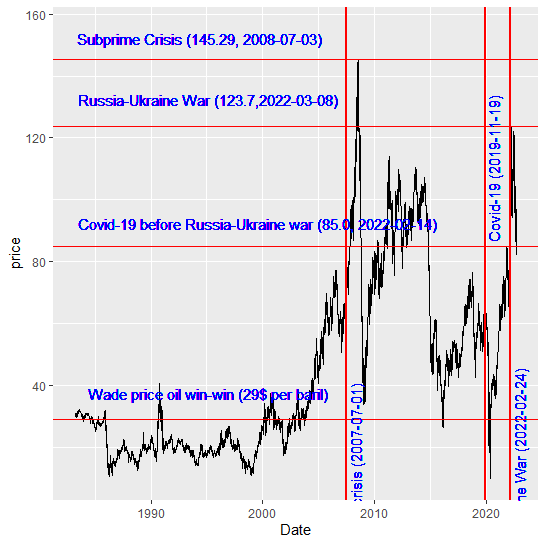}
}
\caption{Wade Curve}
\end{figure}
\begin{figure}[h]
\subfloat[oil wold demande optimal control result\label{wadefig5}]{
\includegraphics[width=0.48\textwidth]{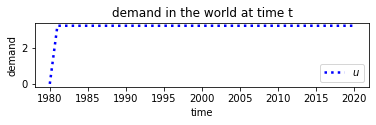}
}
\hfill
\subfloat[Objective optimal control function\label{wadefig6}]{
\includegraphics[width=.48\textwidth]{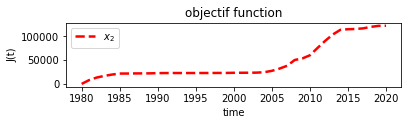}
}
\label{Wade Curve1}
\vfill
\subfloat[Reserves real data from IEA\label{wadefig7}]{
\includegraphics[width=.5\textwidth]{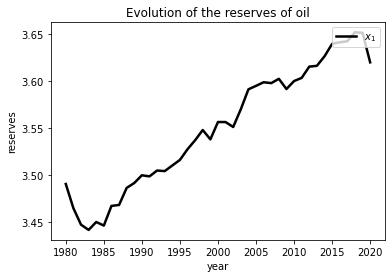}
}
\hfill
\label{Wade Curve2}
\ \\
\noindent \textit{Notes: The horizontal lines in Fig. 2 represent the maximum price of a barrel of oil during the different crises (Subprime around \$146, covid19 before the Ukrainian war, \$85 and Russo-Ukrainian war around \$124).The vertical lines are the beginning of the different cries}	
\end{figure}

\section{Concluding and Remarks}\label{ccl}
The Wade Formula  is also  satisfied if one day in the future the 29 dollars are not reasonable like a win win price. It suffices to replace 29  by $P_0(t)$ where $P_0(t)$  could satisfy the following system of equation and  without financial speculation 
$\frac{d P_0(t)}{dt }= f(i(t)) - \mu P_0(t)$ and $P_0(t)= i(t)+pr$. The index $i$ is the  investment per barrel, $ \mu \in [0,1[$ is the depreciation rate, $pr$ represents the  reasonable profit and $f$ is a function to be determined.


\end{document}